\newcommand{\Nset}{\mathbb{N}}
\newcommand{\Rset}{\mathbb{R}}
\newcommand{\Uset}{\mathbb{U}}
\newcommand{\Xset}{\mathbb{X}}
\newcommand{\cK}{\mathcal{K}}
\newcommand{\cL}{\mathcal{L}}
\newcommand{\bu}{\mathbf{u}}
\newcommand{\bx}{\mathbf{x}}
\newcommand{\diag}{\operatorname{diag}}
\newtheorem{theorem}{Theorem}[section]}
\newtheorem{proposition}[theorem]{Proposition}}
\newtheorem{corollary}[theorem]{Corollary}}
\newtheorem{definition}[theorem]{Definition}}
\newtheorem{problem}[theorem]{Problem}}
\newtheorem{remark}[theorem]{Remark}}
\newtheorem{assumption}[theorem]{Assumption}}
\title{\LARGE \bf
Stabilization of discrete--time nonlinear systems\\ based on control dissipation functions
}
\author{Mircea Lazar 
\thanks{M. Lazar is with the Department of Electrical Engineering, Eindhoven University of Technology, The Netherlands, E-mail:
        {\tt\small m.lazar@tue.nl}.}%
}
\begin{document}

\maketitle
\thispagestyle{empty}
\pagestyle{empty}

\begin{abstract}
This paper develops a dissipativity--based framework for synthesis of stabilizing controllers for discrete--time nonlinear systems subject to state/input constraints. Firstly, we revisit dissipation inequalities for discrete--time nonlinear systems that involve general storage and supply functions. We prove that positive definite storage functions and cyclically negative supply functions yield asymptotic Lyapunov stability. Secondly, we define control dissipation functions (CDFs) and we construct stabilizing receding horizon controllers by minimizing CDFs subject to a cyclically negative supply condition. The effectiveness of stabilizing controllers based on CDFs is demonstrated on stabilization of interconnected synchronous generators with nonlinear coupling and state/input constraints.
\end{abstract}

\section{Introduction}
\label{sec1}
The problem considered in this paper is stabilizing controller synthesis for discrete--time nonlinear systems that are possibly subject to constraints. The standard approach to this problem relies on control Lyapunov functions (CLFs) \cite{Artstein83, Sontag83, Kellett04SCL}, which provide a systematic framework for synthesis of stabilizing controllers. However, the construction of CLFs for general nonlinear systems is a challenging problem. Ideally, it would be desirable to use arbitrary positive definite functions as candidate CLFs. Indeed, since Sontag's universal formula \cite{Sontag83}, the most common approach in the literature assumes a quadratic positive definite function as a candidate CLF for the original nonlinear system at the cost of conservatism (e.g, computed as a local CLF for a linearization of the nonlinear dynamics).

Several approaches to stabilizing controller synthesis for discrete--time nonlinear systems based on CLFs have been explored, many of which have been developed within the field of nonlinear model predictive control (MPC). Therein, CLFs have been be used to design terminal stabilizing costs  \cite{Mayne2013} or as explicit stabilizing constraints \cite{DavidTAC}. Approaches for terminal cost design include, for example, nonlinear LQR design \cite{Raimondo2009}, computation of CLFs using Taylor series approximations of nonlinear dynamics and sum--of--squares techniques \cite{Lucia2015}, or computation of finite--step, non--monotone CLFs (i.e., CLFs that are required to decrease after a finite number of discrete--time steps instead of at every time step) as terminal costs \cite{LazarNMPC18}. Approaches that use control Lyapunov functions explicitly as a constraint within receding horizon control (or model predictive control), are typically referred to as Lyapunov--based MPC, see for example \cite{DavidTAC} and the references therein. Within this approach, flexible, non--monotone control Lyapunov functions were employed in stabilizing receding horizon control in \cite{Lazar2009} and finite--step, non--monotone CLFs \cite{Nikos_FTCLF} were employed in online optimization based stabilizing control in \cite{Noroozi2020}. Compared to standard CLFs, flexible and finite--step (or finite--time) CLFs offer more freedom in the choice of candidate CLFs, but the choice of certain parameters is non--trivial (e.g., $\lambda$--dynamics or $M$--step value).

Approaches that combine dissipation inequalities \cite{Willems2007, Ebenbauer2009} with receding horizon control (or model predictive control) in the spirit of Lyapunov--based MPC have also been proposed in a number of works as follows. A first instance of receding horizon control based on a specific, $\mathcal{H}_\infty$--type of dissipation inequality for discrete--time linear systems, can be found in \cite{Chen2006}. MPC algorithms with passivity--type constraints, which correspond to dissipation inequalities with a specific supply function, were developed in \cite{Raff2007} for continuous--time nonlinear systems and in \cite{Brogi2008} for discrete--time hybrid systems. The passivity based nonlinear MPC formulation of \cite{Raff2007} was extended to distributed control of interconnected continuous--time nonlinear systems in \cite{Varutti2012}, while also using more general \emph{QSR}--type of supply functions \cite{Willems2007}. In \cite{LazarDiss2012} a robustly stabilizing receding horizon controller was developed for discrete--time nonlinear systems based on parameterized input--to--state stability--type of dissipation inequalities. The framework of \cite{Raff2007} was reconsidered in \cite{Yan2016}, where stabilization of isolated continuous--time nonlinear systems using dissipation inequalities with \emph{QSR}--type of supply functions was addressed. More recently, \cite{Tran2018} developed a LMI--based receding horizon control framework for stabilization of discrete--time linear and input affine nonlinear systems based on dissipation inequalities with \emph{QSR}--type of supply functions.

Within the framework of economic MPC, \cite{DongAngeli2017} introduced the notion of a control storage function for discrete-time nonlinear systems in order to design stabilizing terminal ingredients. Specific, zero--sum supply functions related to the economic MPC stage cost were designed therein such that the summation of two control storage functions yields a classical control Lyapunov function. 

In this paper we revisit the stabilization problem for discrete--time nonlinear systems using a new approach, based on dissipation inequalities. Firstly, we provide a result applicable to general storage/supply functions, which proves that dissipativity with a positive definite storage function and a cyclically negative supply function implies asymptotic Lyapunov stability. Compared to \cite{DongAngeli2017} and classical Lyapunov methods, we establish asymptotic Lyapunov stability in a less conservative way, without using classical Lyapunov functions. Secondly, we provide a Massera--type construction of control dissipation (or storage) functions (CDFs) and we design stabilizing receding horizon controllers by minimizing CDFs subject to a cyclically negative supply condition. The developed framework allows usage of arbitrary kernel (positive definite) functions for generating candidate CDFs and corresponding supply functions. The effectiveness of stabilizing controllers based on CDFs is demonstrated in stabilization of interconnected synchronous generators with nonlinear coupling and state/input constraints.


\section{Inferring stability from dissipativity}
\label{sec2}
Let $\Rset$, $\Rset_+$ and $\Nset$ denote the field of real numbers, the set of non--negative reals and the set of natural numbers, respectively. For a vector $x\in\Rset^n$, $\|x\|$ denotes an arbitrary $p$-norm, $p\in\Nset_{\geq 1}\cup\infty$. A function $\varphi:\Rset_+\rightarrow\Rset_+$ \emph{belongs to class $\cK$} if it is continuous, strictly increasing and $\varphi(0)=0$. A function $\varphi:\Rset_+\rightarrow\Rset_+$ \emph{belongs to class $\cK_\infty$} if $\varphi\in\cK$ and $\lim_{s\rightarrow\infty}\varphi(s)=\infty$.

Consider a discrete--time dynamical system
\begin{equation}
\label{eq:2.1}
\begin{split}
x(k+1)&=f(x(k),u(k)), \quad k\in\Nset,
\end{split}
\end{equation}
where $f:\Rset^n\times\Rset^m\rightarrow\Rset^n$ is a suitable function that is zero at zero. We assume that the origin is a stabilizable equilibrium for \eqref{eq:2.1}. The system variables are constrained to compact sets with the origin in their interior, i.e. $(x,u)\in\Xset\times\Uset$. We assume that $\Xset$ is a constrained control invariant set, i.e., for all $x\in\Xset$, there exists a $u:=\kappa(x)\in\Uset$ with $\kappa(0)=0$ such that $f(x,u)\in\Xset$. 
For brevity, we refer to \cite{LazarPhD2006} for the definition of  asymptotic Lyapunov stability in $\Xset$.

Consider the following discrete--time dissipation inequality
\begin{equation}
\label{eq:2.2din}
V(x(k+1))-V(x(k))\leq s(x(k)),\quad\forall k\in\Nset,
\end{equation}
where $V:\Rset^n\rightarrow \Rset$ is a storage function and $s:\Rset^n\rightarrow\Rset$ is a supply function.

The usual requirements for inferring stability from a dissipation inequality \cite{Ebenbauer2009, Gielen2015} are that the storage function $V$ is upper and lower bounded by $\cK_\infty$ functions (which ensures that $V$ is positive definite and radially unbounded) and $s(x):=-\alpha(\|x\|)$ for some $\alpha\in\cK_\infty$. Indeed, under these assumptions \eqref{eq:2.2din} corresponds to Lyapunov's inequality.

In what follows we relax the latter requirement and provide a less conservative condition on the supply function $s$ for inferring asymptotic stability from dissipativity. To this end, the following assumptions are instrumental.
\begin{assumption}
	\label{assum:boundedness}
	\emph{Controlled $\cK$--boundedness:} For the systems dynamics $f(\cdot,\cdot)$ and the state--feedback control law $u(k)=\kappa(x(k))$ that renders $\Xset$ controlled invariant it holds that $\|f(x,\kappa(x))\|\leq\sigma(\|x\|)$ for all $x\in\Xset$ and some $\sigma\in\cK$.
\end{assumption}

The above $\cK$--boundedness property, originally defined in \cite{Nikos_FTCLF} (see Definition~1 therein), is not conservative as it does not imply continuity of $f(\cdot,\cdot)$ (except at the origin, which is a necessary condition for Lyapunov stability). Also, it is worth to mention that $\cK$--boundedness is a necessary condition for $\cK\cL$--stability, as shown in \cite{Gielen2015}.

\begin{assumption}
\label{assum:posdef}
The storage function $V$ satisfies
\begin{equation}
\label{eq:2.3vbounds}
\begin{split}
\alpha_1(\|x\|)\leq V(x)\leq\alpha_2(\|x\|),\quad \forall x\in\Xset,
\end{split}
\end{equation}
for some $\alpha_1,\alpha_2\in\cK_\infty$.
\end{assumption}
The above assumption on the storage function $V$ is used in this paper to ensure stability of \eqref{eq:2.1} via the dissipation inequality \eqref{eq:2.2din}. If the purpose is to ensure dissipative behavior in general, this assumption is not necessary. 
\begin{theorem}
\label{thm:dissip_implies_stability}
Let Assumption~\ref{assum:boundedness} and Assumption~\ref{assum:posdef} hold and let $\alpha_s\in\cK_\infty$. Suppose that $\{u(k)\}_{k\in\Nset}=\{\kappa(x(k))\}_{k\in\Nset}$ is such that the dissipation inequality \eqref{eq:2.2din} holds for all $x(0)\in\Xset$ and all $k\in\Nset$ and $(x(k),u(k))\in \Xset\times\Uset$ for all $k\in\Nset$. Furthermore, suppose that there exists a $M\in\Nset_{\geq 1}$ such that for all $x(0)\in\Xset$ it holds that
\begin{equation}
\label{eq:2.5neg_storage}
\sum_{i=0}^{M-1}s(x(k+i))\leq -\alpha_s(\|x(k)\|),\quad\forall k\in\Nset.
\end{equation}
Then the origin of system \eqref{eq:2.1} in closed--loop with $u(k)=\kappa(x(k))$ is asymptotically Lyapunov stable in  $\Xset$.
\end{theorem}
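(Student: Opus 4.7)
The natural plan is to turn the per-step dissipation inequality \eqref{eq:2.2din} together with the cyclic negativity \eqref{eq:2.5neg_storage} into a finite-step Lyapunov-type decrease of $V$, and then patch together the trajectory between these $M$-step ``check points'' using the $\cK$-boundedness assumption.

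First I would telescope \eqref{eq:2.2din} over the window $[k,k+M]$ to obtain
\begin{equation*}
V(x(k+M))-V(x(k))\leq \sum_{i=0}^{M-1}s(x(k+i))\leq -\alpha_s(\|x(k)\|)
\end{equation*}
for every $k\in\Nset$. Together with Assumption~\ref{assum:posdef}, this is exactly the kind of $M$-step, non-monotone Lyapunov decrease that has been shown in the literature (and that the paper alludes to via \cite{Nikos_FTCLF,LazarNMPC18}) to imply asymptotic stability. The remaining work is to derive the standard $\varepsilon$--$\delta$ property and attractivity from it, using only the ingredients in Assumptions~\ref{assum:boundedness} and~\ref{assum:posdef}.

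For Lyapunov stability, I would work along the subsequence $\{x(jM)\}_{j\in\Nset}$. The inequality above gives $V(x(jM))\leq V(x(0))\leq \alpha_2(\|x(0)\|)$ monotonically in $j$, so $\|x(jM)\|\leq \alpha_1^{-1}(\alpha_2(\|x(0)\|))$. To control the intermediate samples $x(jM+i)$ for $i\in\{1,\dots,M-1\}$, I would iterate Assumption~\ref{assum:boundedness} $i$ times to get $\|x(jM+i)\|\leq \sigma^{i}(\|x(jM)\|)$, where $\sigma^{i}$ denotes the $i$-fold composition and belongs to $\cK$. Defining $\tilde\sigma(s):=\max_{0\leq i\leq M-1}\sigma^{i}(s)$, which is again in $\cK$ (since $\sigma^{0}=\mathrm{id}$ and the pointwise maximum of class~$\cK$ functions is in $\cK$), I obtain
\begin{equation*}
\|x(k)\|\leq \tilde\sigma\!\left(\alpha_1^{-1}\!\left(\alpha_2(\|x(0)\|)\right)\right),\qquad \forall k\in\Nset.
\end{equation*}
Since the right-hand side is a $\cK$-function of $\|x(0)\|$ that vanishes at zero, the standard $\varepsilon$--$\delta$ argument yields Lyapunov stability.

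For attractivity I would use that $\{V(x(jM))\}_{j\in\Nset}$ is non-increasing and non-negative, hence convergent, and from the telescoping
\begin{equation*}
\sum_{j=0}^{\infty}\alpha_s(\|x(jM)\|)\leq V(x(0))-\lim_{j\to\infty}V(x(jM))<\infty,
\end{equation*}
conclude $\alpha_s(\|x(jM)\|)\to 0$, i.e.\ $\|x(jM)\|\to 0$. The intermediate samples are then handled once more by $\|x(jM+i)\|\leq \tilde\sigma(\|x(jM)\|)$, so $x(k)\to 0$ along the full time axis. Combined with the Lyapunov stability proved in the previous step, this gives asymptotic Lyapunov stability in $\Xset$.

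The only real obstacle is the gap between the ``every $M$ steps'' decrease and per-step behaviour: without Assumption~\ref{assum:boundedness}, nothing prevents $x(jM+i)$ from spiking arbitrarily far between check points, so both Lyapunov stability and the passage from $\|x(jM)\|\to 0$ to $x(k)\to 0$ hinge on the $\cK$-bound $\sigma$ and the fact that finite compositions of $\cK$-functions remain in $\cK$. Everything else is essentially routine bookkeeping.
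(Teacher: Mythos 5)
Your proof is correct and shares the paper's overall skeleton (telescope \eqref{eq:2.2din} over an $M$-window, invoke \eqref{eq:2.5neg_storage} to get an $M$-step decrease of $V$, and use Assumption~\ref{assum:boundedness} to control the samples between checkpoints), but it diverges from the paper in both halves in ways worth noting. For Lyapunov stability, the paper chains the decrease \emph{backward} from $x(jM+i)$ to $x(i)$, $i\in\{0,\dots,M-1\}$, and bounds $\|x(i)\|\leq\sigma^i(\|x(0)\|)$ on the initial segment, arriving at $\|x(k)\|\leq\alpha_1^{-1}\circ\alpha_2\circ\nu(\|x(0)\|)$ with $\nu\in\cK_\infty$ dominating $\max_i\sigma^i$ so that $\delta=\nu^{-1}\circ\alpha_2^{-1}\circ\alpha_1(\varepsilon)$ can be written explicitly; you instead first bound the checkpoint $\|x(jM)\|$ through $V$ and then propagate \emph{forward} with $\sigma^i$, obtaining $\|x(k)\|\leq\tilde\sigma\circ\alpha_1^{-1}\circ\alpha_2(\|x(0)\|)$ --- an equally valid comparison function, and you correctly avoid having to invert it. The more substantive difference is in attractivity: the paper tracks all $M$ subsequences $\{V(x(jM+i))\}_{j}$, argues each converges, and invokes a discrete-time Barbalat lemma \cite{Ghaffari2020} to conclude that the summed supply terms vanish; you need only the single checkpoint subsequence $\{x(jM)\}_{j}$, for which the telescoped series $\sum_{j}\alpha_s(\|x(jM)\|)<\infty$ immediately gives $\|x(jM)\|\to 0$ (via continuity of $\alpha_s^{-1}$ at zero, available since $\alpha_s\in\cK_\infty$), and the bound $\|x(jM+i)\|\leq\tilde\sigma(\|x(jM)\|)$ then transfers convergence to the whole trajectory. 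Your route is more elementary --- it dispenses with the Barbalat lemma and with the convergence of the intermediate subsequences of $V$ --- while the paper's route makes the role of the accumulated supply more explicit; both deliver the same conclusion under the same hypotheses.
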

\begin{proof}
First, we will prove Lyapunov stability.  Adding up inequality \eqref{eq:2.2din} from $k$ to $k+M-1$ and using \eqref{eq:2.5neg_storage} yields that for all $k\in\Nset$, it holds that
\[V(x(k+M))\leq V(x(k))-\alpha_s(\|x(k)\|)\leq V(x(k)).\]
Next, observe that for any $k\in\Nset$ we can write $k=jM+i$ for some $j\in\Nset$ anf $i\in\Nset_{[0,M-1]}$.
Then the above inequality can be rewritten as
\begin{equation}
\label{eq:2Vj}
\begin{split}
V(x((j+1)M+i))\leq V(x(jM+i)),\quad\forall j\in\Nset,
\end{split}
\end{equation}
for all $i\in\Nset_{[0,M-1]}$. Hence, by Assumption~\ref{assum:posdef} we have that for all $x(0)\in\Xset$ and $k\in\Nset$ there exists an $i\in\Nset_{[0,M-1]}$ such that
\[\alpha_1(\|x(k)\|)\leq V(x(k))\leq\ldots\leq V(x(i))\leq\alpha_2(\|x(i)\|).\]
Then, by Assumption~\ref{assum:boundedness}, we have that
\[
\begin{split}
\alpha_2(\|x(i)\|)&=\alpha_2(\|f(x(i-1),\kappa(x(i-1)))\|)\\
&\leq\alpha_2\circ\sigma(\|x(i-1)\|)\\
&\leq \alpha_2\circ\sigma^i(\|x(0)\|),\,\, \forall i\in\Nset_{[0,M-1]}.
\end{split}
\]
Define $\sigma_i(c):=\sigma^i(c)\in\cK$ for all $i\in\Nset_{[0,M-1]}$ and let $\bar\sigma:=\max\{\sigma_1,\ldots,\sigma_{M-1},\text{id}\}\in\cK$. Then, there exists a $\nu\in\cK_\infty$ such that $\bar\sigma(\|x\|)\leq\nu(\|x\|)$ for all $x\in\Xset$ and it holds that for all $x(0)\in\Xset$ and $k\in\Nset$
\[\|x(k)\|\leq \alpha_1^{-1}\circ\alpha_2\circ\nu(\|x(0)\|).\]
Hence, for any $\varepsilon>0$ we can choose $\delta\leq\nu^{-1}\circ\alpha_2^{-1}\circ\alpha_1(\varepsilon)$ such that $\|x(0)\|\leq \delta$ implies $\|x(k)\|\leq\varepsilon$ for all $k\in\Nset$.

Next, we prove asymptotic convergence. As already shown, it holds that
\begin{equation}
\label{eq:2Mstep}
V(x(k+M))-V(x(k))\leq -\alpha_s(\|x(k)\|),\quad \forall k\in\Nset.
\end{equation}
By summing up \eqref{eq:2Mstep} for $k=0,1,\ldots,\infty$ we obtain:
\[\begin{split}
&\lim_{j\rightarrow\infty}\left[V(x(jM))+\ldots+V(x(jM+M-1))\right]-\\
&\left[V(x(0))+\ldots+V(x(M-1))\right]\\
&\leq -\sum_{j=0}^\infty[\alpha_s(\|x(jM)\|)+\ldots+\alpha_s(\|x(jM+M-1)\|)].
\end{split}
\]
Since $V$ is lower bounded by zero, inequality \eqref{eq:2Vj} implies $\lim_{j\rightarrow\infty}V(x(jM+i))=V_L^i$ exists for all $i\in\Nset_{[0,M-1]}$. Hence,
\[
\begin{split}
&\sum_{i=0}^{M-1}V(x(i))-\sum_{i=0}^{M-1}V_L^i\\&\geq \sum_{j=0}^\infty[\alpha_s(\|x(jM)\|)+\ldots+\alpha_s(\|x(jM+M-1)\|)]
\end{split}
\]
and the left hand side of the inequality is a finite positive quantity. Then, by a discrete--time correspondent of Barbalat's lemma, see, e.g., \cite[Lemma~2]{Ghaffari2020} ($\alpha_s\in\cK_\infty$ implies that $\alpha_s$ is uniformly continuous on compact sets) we obtain
\[
\begin{split}
\lim_{j\rightarrow\infty}[\alpha_s(\|x(jM)\|)+\ldots+\alpha_s(\|x(jM+M-1)\|)]=0.
\end{split}
\]
Thus, by the properties of $\cK_\infty$--functions it follows that \[\lim_{j\rightarrow\infty}\|x(jM+i)\|=0,\quad \forall i\in\Nset_{[0,M-1]}.\]
Since $\{\|x(jM+i)\|\}_{j\in\Nset}$ with $i\in\Nset_{[0,M-1]}$ are convergent subsequences of $\{\|x(k)\|\}_{k\in\Nset}$ that converge to the same limit (and also include all elements appearing in $\{\|x(k)\|\}_{k\in\Nset}$), it follows that   $\lim_{k\rightarrow\infty}\|x(k)\|=0$, which completes the proof.
\end{proof}

Theorem~\ref{thm:dissip_implies_stability} is a generalization of Lyapunov theorems, which essentially establishes that positive definite storage functions with a cyclically negative supply imply asymptotic Lyapunov stability. For simplicity of exposition in this paper we considered the case of a common cycle period $M$ for all $x\in\Xset$; however, condition \eqref{eq:2.5neg_storage} can be further relaxed by allowing state--dependent cycles, i.e., $M(x)$, under suitable assumptions, which will be addressed in follow up work.
\begin{remark}
Condition \eqref{eq:2.5neg_storage} on the supply $s$ together with the dissipation inequality \eqref{eq:2.2din} imply a strict dissipation inequality, as defined, for example, in \cite{DongAngeli2017}. Indeed, \eqref{eq:2.5neg_storage} implies that
\[V(x(k+1))-V(x(k))\leq -\alpha_s(\|x(k)\|)+\bar{s}(x(k)),\quad\forall k\in\Nset\]
with $\bar{s}(x(k)):=-\sum_{i=1}^{M-1}s(x(k+i))$.
\end{remark}

\section{Control dissipation functions}
\label{sec3}

Next, we define control dissipation functions for constrained discrete--time nonlinear systems. 
\begin{definition}
	\label{ref:CDF}
	A storage function $V$ is a control dissipation function (CDF) in $\Xset$ for system \eqref{eq:2.1} with supply function $s$ if there exists a feedback control law $u(k):=\kappa(x(k))$ with $\kappa(0)=0$ such that $(i)$ $\Xset$ is constrained controlled invariant for $x(k+1)=f(x(k),\kappa(x(k)))$ and $(ii)$ the dissipation inequality \eqref{eq:2.2din} folds for all $x(0)\in\Xset$.
\end{definition}

Note that control dissipation functions recover control Lyapunov functions when the storage function $V$ is positive definite and the supply function $s$ is negative definite. It is worth to recall that a similar notion termed control storage function was previously introduced in \cite{DongAngeli2017}, with the goal to design stabilizing terminal ingredients for economic MPC. Therein, control storage functions were used as a terminal cost. However, asymptotic stability of economic MPC was still established via a standard MPC Lyapunov inequality (i.e., inequality (23) therein) and a standard control Lyapunov function (i.e., $\overline{V}_f(x)$ therein). The latter was obtained as the summation of a control storage function ($V_f(x)$ therein) and a strict storage function ($\lambda(x)$ therein) with zero--sum corresponding supply functions (i.e., $l_{av}^\ast-l(x,u)$ and $l(x,u)-l_{av}^\ast$). Continuity of the storage function was assumed therein, but this is not necessary in the discrete--time setting. 

In what follows we develop a Massera--type construction of candidate CDFs generated by arbitrary positive definite kernel functions $l:\Rset^n\rightarrow\Rset_+$, i.e.,
\begin{equation}
\label{eq:3:Vstorage}
V(x(k)):=\sum_{i=0}^{N-1}l(x(i|k)), \quad k\in\Nset,
\end{equation}
where $N\geq 2$ and $x(i|k)$ is obtained from measured state $x(0|k):=x(k)$, inputs $u(i|k)$, $i\in\Nset_{[0,N-2]}$, with $u(0|k)=u(k)$ and the system model \eqref{eq:2.1}, i.e.,
$x(i+1|k)=f(x(i|k),u(i|k))$, for all $i\in\Nset_{[0,N-2]}$.

Define $\bu(k):=\{u(0|k),\ldots,u(N-2|k)\}$ and consider the following optimization problem.
\begin{problem}[Receding horizon CDF minimization]
\label{OCP1}
\begin{subequations}
\label{eq:3.2}
\begin{align}
\min_{\bu(k)}V(x(k))&=\min_{\bu(k)}\sum_{i=0}^{N-1} l(x(i|k))\label{eq:3.2a}\\
&\text{subject to constraints:}\nonumber\\
x(i+1|k)&=f(x(i|k),u(i|k)),\quad \forall i\in\Nset_{[0,N-2]},\label{eq:3.2b}\\
(x(i+1|k)&,u(i|k))\in\Xset\times\Uset,\quad \forall i\in\Nset_{[0,N-2]}.\label{eq:3.2c}
\end{align}
\end{subequations}
\end{problem}
Let $\bu^\ast(k):=\{u^\ast(i|k)\}_{i\in\Nset_{[0,N-2]}}$ and $\bx^\ast(k):=\{x^\ast(i|k)\}_{i\in\Nset_{[1,N-1]}}$ denote optimal input and state trajectories obtained by solving Problem~\ref{OCP1} at time $k$. Since we assume that $\Xset$ is a constrained control invariant set for system \eqref{eq:2.1}, Problem~\ref{OCP1} is recursively feasible in $\Xset$. Otherwise, the following results hold for the largest constrained control invariant subset of $\Xset$. 

Let $\bu^s(k):=\{u^\ast(1|k),\ldots,u^\ast(N-2|k),u^s(N-1|k)\}\in\Uset^{N-1}$ denote an admissible shifted input trajectory constructed at time $k$ from the optimal input trajectory $\bu^\ast(k)$ computed at time $k$, where $u^s(N-1|k)$ is to be determined.
\begin{theorem}
\label{thm:MPCdissip}
Suppose that $\Xset$ is a constrained control invariant set for system \eqref{eq:2.1}. Given the optimal trajectories $\bu^\ast(k)$ and $\bx^\ast(k)$ of Problem~\ref{OCP1} at time $k$ for current state $x(k)$, let $x^s(N|k):=f(x^\ast(N-1|k),u^s(N-1|k))$ and define the supply function
\begin{equation}
\label{eq:3:sup_MPC}
\begin{split}
s(x(k))&:=l(x^s(N|k))-l(x(k)),\quad k\in\Nset.\\
\end{split}
\end{equation}
Then $V^\ast(x(k)):=\sum_{i=0}^{N-1}l(x^\ast(i|k))$ is a control dissipation function with supply function $s$ for system \eqref{eq:2.1} in closed--loop with $u(k):=u^\ast(0|k)$ obtained by solving Problem~\ref{OCP1}.
\end{theorem}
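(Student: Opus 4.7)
The plan is to verify both conditions of Definition~\ref{ref:CDF} for the closed--loop system driven by $u(k):=u^\ast(0|k)$, with the receding-horizon-style argument that is standard in MPC adapted to the present ``dissipation'' framing.

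First I would dispose of condition $(i)$. Since $\mathbb{X}$ is assumed constrained controlled invariant for \eqref{eq:2.1}, at every state $x^\ast(N-1|k)\in\mathbb{X}$ there exists $u^s(N-1|k)\in\mathbb{U}$ with $f(x^\ast(N-1|k),u^s(N-1|k))\in\mathbb{X}$; this is precisely how the terminal state $x^s(N|k)$ in the supply \eqref{eq:3:sup_MPC} is produced. Combined with the fact that $x(k+1)=f(x(k),u^\ast(0|k))=x^\ast(1|k)\in\mathbb{X}$ (which follows from the constraint \eqref{eq:3.2c} satisfied by the optimal solution), this gives recursive feasibility of Problem~\ref{OCP1} and hence constrained controlled invariance of $\mathbb{X}$ under the RHC feedback law $\kappa(x(k)):=u^\ast(0|k)$.

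Next, to establish the dissipation inequality \eqref{eq:2.2din} with the supply $s$ in \eqref{eq:3:sup_MPC}, I would construct a feasible (but in general sub-optimal) warm-started candidate at time $k+1$ by the standard shift: use the input sequence $\mathbf{u}^s(k)=\{u^\ast(1|k),\ldots,u^\ast(N-2|k),u^s(N-1|k)\}$ starting from $x(k+1)=x^\ast(1|k)$. The resulting predicted state trajectory is exactly $x^\ast(2|k),\ldots,x^\ast(N-1|k),x^s(N|k)$, all of which lie in $\mathbb{X}$ by the feasibility of the optimal trajectory at time $k$ and the construction of $u^s(N-1|k)$; likewise each applied input lies in $\mathbb{U}$. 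Consequently this candidate is feasible for Problem~\ref{OCP1} at time $k+1$, and its cost is
\[
V^s(x(k+1))=\sum_{i=1}^{N-1}l(x^\ast(i|k))+l(x^s(N|k))=V^\ast(x(k))-l(x(k))+l(x^s(N|k)).
\]
By optimality at time $k+1$ we have $V^\ast(x(k+1))\leq V^s(x(k+1))$, which rearranges exactly to $V^\ast(x(k+1))-V^\ast(x(k))\leq l(x^s(N|k))-l(x(k))=s(x(k))$, establishing \eqref{eq:2.2din}.

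The only delicate point, and the main obstacle, is the feasibility of the shifted candidate at the terminal step: we need $u^s(N-1|k)\in\mathbb{U}$ together with $x^s(N|k)\in\mathbb{X}$. This is where the constrained controlled invariance of $\mathbb{X}$ for \eqref{eq:2.1} is essential; without it the shift argument collapses and no terminal-cost/terminal-set ingredient has been introduced in Problem~\ref{OCP1} to substitute for it. Note that no positive-definiteness hypothesis on $V^\ast$ or any sign condition on $s$ is needed for this theorem---those requirements (Assumption~\ref{assum:posdef} and condition \eqref{eq:2.5neg_storage}) enter only when Theorem~\ref{thm:dissip_implies_stability} is subsequently invoked to turn dissipativity into asymptotic Lyapunov stability.
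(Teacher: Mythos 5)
Your proposal is correct and follows essentially the same route as the paper: the shifted input sequence $\{u^\ast(1|k),\ldots,u^\ast(N-2|k),u^s(N-1|k)\}$ is shown feasible at time $k+1$ via constrained control invariance of $\Xset$, and suboptimality of this candidate yields $V^\ast(x(k+1))-V^\ast(x(k))\leq l(x^s(N|k))-l(x(k))=s(x(k))$ exactly as in the paper's telescoping computation. Your additional remarks (explicitly verifying condition $(i)$ of Definition~\ref{ref:CDF} and noting that no positive-definiteness or sign conditions are needed at this stage) are accurate but do not change the argument.
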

\begin{proof}
First, notice that the shifted input trajectory $\bu^s(k)$ and corresponding shifted state trajectory are feasible solutions of Problem~\ref{OCP1} at time $k+1$ because $x(k+1)=x^\ast(1|k)\in\Xset$ and there exists a $u^s(N-1|k)\in\Uset$ such that $x^s(N|k)=f(x^\ast(N-1|k),u^s(N-1|k))\in\Xset$, due to constrained control invariance of $\Xset$. Then, by optimality at time $k\in\Nset$, it holds that
\[
\begin{split}
V^\ast(x(k+1))&-V^\ast(x(k))\\&= \sum_{i=0}^{N-1}l(x^\ast(i|k+1))-\sum_{i=0}^{N-1}l(x^\ast(i|k))\\
&\leq l(x^s(N|k))+\sum_{i=0}^{N-2}l(x^\ast(i+1|k))\\
&-\sum_{i=1}^{N-1}l(x^\ast(i|k))-l(x^\ast(0|k))\\
&=l(x^s(N|k))-l(x(k))=s(x(k)).
\end{split}
\]
Hence, $V^\ast$ is a CDF with supply function $s$ defined in \eqref{eq:3:sup_MPC} for system \eqref{eq:2.1} in closed--loop with $u(k):=u^\ast(0|k)$ obtained by solving Problem~\ref{OCP1}.
\end{proof}

Problem~\ref{OCP1} can be used to ensure a desired closed--loop dissipative behavior  prescribed by specific choices of storage and supply functions. 

\subsection{CDFs with a cyclically negative supply}

Next, we will combine the result of Theorem~\ref{thm:MPCdissip} with the result of Theorem~\ref{thm:dissip_implies_stability} to design stabilizing receding horizon controllers for discrete--time nonlinear systems.
\begin{corollary}
\label{stab:MPC}
Let $l$ satisfy inequality \eqref{eq:2.3vbounds} for some $\alpha_1, \alpha_2\in\cK_\infty$. Assume that $\Xset$ is a constrained control invariant set for system \eqref{eq:2.1}. Suppose that Assumption~\ref{assum:boundedness} holds and there exists a $\rho\in\cK_\infty$, $\rho<\text{id}$, such that the supply function $s$ defined in \eqref{eq:3:sup_MPC} satisfies for all $x(0)\in\Xset$
\begin{equation}
\label{eq3:MPC:slessthanl}
\sum_{i=0}^{M-1}s(x(k+i))\leq -\rho\circ l(x(k)),\quad\forall k\in\Nset.
\end{equation}
Then for system \eqref{eq:2.1} in closed--loop with $u(k):=u^\ast(0|k)$ obtained by solving Problem~\ref{OCP1} it holds that $V^\ast$ is a control dissipation function in $\Xset$ with supply function $s$ and the origin is asymptotically Lyapunov stable in $\Xset$.
\end{corollary}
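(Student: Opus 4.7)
The plan is to invoke Theorem~\ref{thm:MPCdissip} to certify $V^\ast$ as a CDF, then verify the three hypotheses of Theorem~\ref{thm:dissip_implies_stability} (namely Assumption~\ref{assum:boundedness}, Assumption~\ref{assum:posdef}, and the cyclic negativity \eqref{eq:2.5neg_storage}) for the pair $(V^\ast,s)$ under the MPC feedback $u(k)=u^\ast(0|k)$, and then conclude stability. Theorem~\ref{thm:MPCdissip} applies immediately because $\Xset$ is a constrained control invariant set, delivering the dissipation inequality \eqref{eq:2.2din} with storage $V^\ast$ and supply $s$ given by \eqref{eq:3:sup_MPC}; this also settles the CDF claim in the corollary.

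For Assumption~\ref{assum:posdef} I would obtain the lower bound trivially: since $l\geq 0$ and $x^\ast(0|k)=x(k)$,
\[
V^\ast(x(k)) \geq l(x^\ast(0|k)) = l(x(k)) \geq \alpha_1(\|x(k)\|).
\]
For the upper bound I would feed the invariance feedback $\kappa$ (which exists by constrained control invariance of $\Xset$) into Problem~\ref{OCP1} as a suboptimal choice. The resulting trajectory $\{\tilde{x}(i|k)\}_{i=0}^{N-1}$ stays in $\Xset$ and, by Assumption~\ref{assum:boundedness}, satisfies $\|\tilde{x}(i|k)\|\leq\sigma^i(\|x(k)\|)$. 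Optimality then yields
\[
V^\ast(x(k))\leq\sum_{i=0}^{N-1} l(\tilde{x}(i|k))\leq\sum_{i=0}^{N-1}\alpha_2\circ\sigma^i(\|x(k)\|),
\]
and the right--hand side can be majorized on the compact set $\Xset$ by a single $\alpha_2^V\in\cK_\infty$ (the term $i=0$ already contributes $\alpha_2\in\cK_\infty$).

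The cyclic negativity \eqref{eq:2.5neg_storage} is now immediate: combining \eqref{eq3:MPC:slessthanl} with $l(x)\geq\alpha_1(\|x\|)$ and monotonicity of $\rho\in\cK_\infty$ gives
\[
\sum_{i=0}^{M-1}s(x(k+i))\leq -\rho\circ l(x(k))\leq -\rho\circ\alpha_1(\|x(k)\|)=:-\alpha_s(\|x(k)\|),
\]
with $\alpha_s\in\cK_\infty$. Since Assumption~\ref{assum:boundedness} is in force for the MPC closed--loop by hypothesis, all conditions of Theorem~\ref{thm:dissip_implies_stability} are met and asymptotic Lyapunov stability in $\Xset$ follows.

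The main obstacle is the upper bound on $V^\ast$: one must turn the $\cK$--boundedness of the auxiliary invariance feedback into a genuine $\cK_\infty$ majorant of the optimal cost. This is where compactness of $\Xset$ is essential, since the iterated compositions $\sigma^i$ are in general only class $\cK$, not $\cK_\infty$. The hypothesis $\rho<\textup{id}$ does not enter the stability argument directly; it serves to make the cyclic condition \eqref{eq3:MPC:slessthanl} compatible with the nonnegativity of $V^\ast$ (indeed, summing the dissipation inequality over a cycle yields $V^\ast(x(k+M))\leq V^\ast(x(k))-\rho\circ l(x(k))$, which is consistent with $V^\ast\geq 0$ only when $\rho\circ l \lesssim V^\ast$).
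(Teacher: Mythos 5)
Your proposal is correct and follows essentially the same route as the paper: certify the dissipation inequality via Theorem~\ref{thm:MPCdissip}, obtain the lower bound $V^\ast(x)\geq l(x)\geq\alpha_1(\|x\|)$, get the $\cK_\infty$ upper bound from the $\cK$--boundedness of a feasible (suboptimal) trajectory, set $\alpha_s:=\rho\circ\alpha_1$ to recover \eqref{eq:2.5neg_storage}, and conclude from Theorem~\ref{thm:dissip_implies_stability}. You actually spell out the upper--bound argument (suboptimality of the invariance feedback plus iterated $\sigma^i$ compositions) in more detail than the paper, which only asserts it.
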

\begin{proof}
From \eqref{eq3:MPC:slessthanl} and the fact that $l$ satisfies inequality \eqref{eq:2.3vbounds} we obtain that the supply function $s$ satisfies inequality \eqref{eq:2.5neg_storage} with $\alpha_s(c):=\rho\circ\alpha_{1}(c)\in\cK_\infty$. Moreover, it holds that \[V(x(k))\geq l(x(k))\geq \alpha_1(\|x(k)\|),\quad \forall x(k)\in\Xset,\]
and that there exists $\bar{\alpha}_2\in\cK_\infty$ such that $V(x(k))\leq \bar{\alpha}_{2}(\|x(k)\|)$ for all $x(k)\in\Xset$, due to the upper bound on $l$ and the $\cK$--boundedness property of $f(\cdot,\cdot)$. Hence, $V$ satisfies Assumption~\ref{assum:posdef}. The claim then follows from Theorem~\ref{thm:dissip_implies_stability}.
\end{proof}

Problem~\ref{OCP1} recovers the discrete--time correspondent of Sontag's formula as a particular case in the ideal situation when there is no plant--model mismatch and no disturbance. Indeed, for $N=2$ and given $x(k)$, minimization of $V(x(k))$ implies minimization of $l(x(k+1))=l(x(1|k))$, case in which $l$ becomes a candidate CLF. However, if $N>2$, $l$ is no longer required to be a CLF in $\Xset$, and can be taken as any positive definite function in principle. A sensible choice would be to take $l$ as a local CLF, e.g., computed for the linearization of \eqref{eq:2.1} at the origin, as this choice is non--conservative as the state converges closer to the origin.

Next, we develop a framework for verifying or enforcing the cyclically negative supply condition \eqref{eq3:MPC:slessthanl} online. To this end recall the optimal sequences $\bx^\ast(k), \bu^\ast(k)$ computed at time $k$ by solving Problem~\ref{OCP1} and consider the following horizon--1 optimization problem.
\begin{problem}[Dynamic supply construction]
\label{OCP1:1}
\begin{subequations}
\label{eq3:MPChorizon1}
\begin{align}
\min_{u^s(N-1|k)}&l(x^s(N|k))\label{eq3:MPChorizon1a}\\
&\text{subject to constraints:}\nonumber\\
x^s(N|k)&=f(x^\ast(N-1|k),u^s(N-1|k)),\label{eq3:MPChorizon1b}\\
(x^s(N|k)&,u^s(N-1|k))\in\Xset\times\Uset.\label{eq3:MPChorizon1c}
\end{align}
\end{subequations}
\end{problem}
The resulting optimal pair $(x^{s\ast}(N|k),u^{s\ast}(N-1|k))$ can then be used to compute
\[s^\ast(x(k)):=l(x^{s\ast}(N|k))-l(x(k)).\]
Then, it can be determined online for $k\geq M-1$ if the cyclically negative supply condition holds, i.e.
\begin{equation}
\label{eq:3onlinecheck}
\begin{split}
\sum_{i=-M+1}^{0}&s^\ast(x(k+i))\leq -\rho\circ l(x(k-M+1)),
\end{split}
\end{equation}
where the value of $M$ can be increased as $k$ increases. The additional computational cost of solving the horizon--1 Problem~\ref{OCP1:1} is negligible compared to solving Problem~\ref{OCP1}. Indeed, if $l$ is taken as a quadratic positive definite function and the dynamics $f$ is input affine, then Problem~\ref{OCP1:1} reduces to a horizon--1 QP problem, which can be solved explicitly \cite{Cimini2017}. Explicit solutions to Problem~\ref{OCP1:1} can also be obtained if this is a nonlinear programming problem, e.g., by using the framework in \cite{Johansen2004}.
\begin{remark}
At first, for a fixed $N\geq 2$, the value of $M$ can be dynamically increased online, if the cyclically negative supply condition does not hold. If a certain maximally allowed $M$ is reached, then the value of $N$ must be increased. This yields a mechanism for learning along closed--loop trajectories how to adapt $N$ to improve the CDF candidate $V$ in terms of inducing a stabilizing behavior, based on the accumulated supply over time.
\end{remark}

Given any positive definite kernel function $l$, $N$ can be regarded as a tuning parameter of the candidate CDF $V$ and $M$ can be regarded as a tuning parameter of the corresponding cyclically negative supply condition. In general, a larger value for $N$ will allow for the latter to be satisfied for a smaller value of $M$, up to the point when $V$ becomes a control Lyapunov function and we can use $M=1$.

Alternatively, Problem~\ref{OCP1} and Problem~\ref{OCP1:1} can be combined into a single problem in the spirit of Lyapunov--based MPC. To this end, consider the accumulated supply available at time $k\geq M$ to realize the cyclically negative supply condition, i.e.
\[\Gamma(k,M):=-\sum_{i=-M+1}^{-1}s(x(k+i))-\rho\circ l(x(k-M+1))\]
and let $\bu(k):=\{u(0|k),\ldots,u(N-1|k)\}$.
\begin{problem}[CDF minimization + supply constraint]
\label{OCP2}
\begin{subequations}
\label{eq:3.2n}
\begin{align}
\min_{\bu(k)}V(x(k))&=\min_{\bu(k)}\sum_{i=0}^{N-1} l(x(i|k))\label{eq:3.2na}\\
&\text{subject to constraints:}\nonumber\\
x(i+1|k)&=f(x(i|k),u(i|k)),\quad \forall i\in\Nset_{[0,N-1]},\label{eq:3.2nb}\\
(x(i+1|k)&,u(i|k))\in\Xset\times\Uset,\quad \forall i\in\Nset_{[0,N-1]},\label{eq:3.2nc}\\
s(x(k))=& l(x(N|k))-l(x(k))\leq \Gamma(k,M),\quad \forall k\geq M.\label{eq:3.2nd}
\end{align}
\end{subequations}
\end{problem}
If there exists a $u^s(N|k)\in\Uset$ such that the shifted sequence
\[\bu^s(k):=\{u^\ast(1|k),\ldots,u^\ast(N-1|k),u^s(N|k)\}\]
is a feasible solution of Problem~\ref{OCP2} at time $k+1$, then closed--loop asymptotic stability in $\Xset$ follows from Corollary~\ref{stab:MPC}. Further research will deal with developing a priori verifiable conditions for existence of such shifted sequences for Problem~\ref{OCP2}.

\subsection{CDFs with cyclically negative supply versus finite--step control Lyapunov functions}
Next, we analyze the relation between CDFs with a cyclically negative supply function and finite--step CLFs, and corresponding stabilizing controllers. To this end, we recall the definition of finite--step (or finite--time) CLFs, originally defined in \cite{Nikos_FTCLF} (see Definition~3 therein) and more recently analyzed in \cite{Noroozi2020}. Let $M\in\Nset$, $M\geq 1$ and consider the inequality
\begin{equation}
	\label{eq:fsCLF}
	V(x(k+M))-V(x(k))\leq -\nu(\|x(k)\|),\quad k\in\Nset,
\end{equation}
where $\nu\in\cK_\infty$ and $x(k+M)$ is a solution of system \eqref{eq:2.1}.
\begin{definition}
	\label{ref:fsCLF}
	A function $V$ that satisfies Assumption~\ref{assum:posdef} is a finite--step control Lyapunov function for system \eqref{eq:2.1} in $\Xset$ if there exists a state--feedback control law $u(k):=\kappa(x(k))$ with $\kappa(0)=0$ such that $(i)$ $\Xset$ is constrained controlled invariant for $x(k+1)=f(x(k),\kappa(x(k)))$ and $(ii)$ the inequality \eqref{eq:fsCLF} holds for all $x(0)\in\Xset$.
\end{definition}
\begin{proposition}
	\label{prop:equiv}
	A function $V$ satisfying Assumption~\ref{assum:posdef} is a CDF for system \eqref{eq:2.1} in $\Xset$ with a supply function $s$ satisfying \eqref{eq:2.5neg_storage} if and only if $V$ is a finite--step CLF for system \eqref{eq:2.1} in $\Xset$.
\end{proposition}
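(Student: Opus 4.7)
My plan is to prove both implications by telescoping the one-step dissipation inequality over $M$ steps, which will match it with the finite-step CLF inequality \eqref{eq:fsCLF}. The forward direction sums inequalities, the backward direction constructs a suitable supply from the closed-loop increments of $V$.

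For the direction ``CDF with cyclically negative supply $\Rightarrow$ finite-step CLF'', I would fix the feedback $\kappa$ and the supply $s$ given by Definition~\ref{ref:CDF} together with \eqref{eq:2.5neg_storage}. Summing the dissipation inequality \eqref{eq:2.2din} for the closed-loop system from $k$ to $k+M-1$ telescopes the left-hand side to $V(x(k+M))-V(x(k))$, while the right-hand side is $\sum_{i=0}^{M-1}s(x(k+i))\leq -\alpha_s(\|x(k)\|)$ by assumption. Setting $\nu:=\alpha_s\in\cK_\infty$ yields exactly \eqref{eq:fsCLF}. Since $V$ already satisfies Assumption~\ref{assum:posdef}, and $\kappa$ renders $\Xset$ constrained controlled invariant by Definition~\ref{ref:CDF}, all the conditions of Definition~\ref{ref:fsCLF} are met, so $V$ is a finite-step CLF for \eqref{eq:2.1} in $\Xset$ with the same feedback $\kappa$.

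For the converse direction, I would start from the finite-step CLF $V$ with feedback $\kappa$ and decrease function $\nu\in\cK_\infty$. The natural candidate supply is the one-step closed-loop increment, so I would define
\[
s(x):=V(f(x,\kappa(x)))-V(x),\qquad x\in\Xset,
\]
extended arbitrarily outside $\Xset$. With this choice \eqref{eq:2.2din} holds trivially (with equality) along the closed-loop trajectories, so $V$ is a CDF in the sense of Definition~\ref{ref:CDF}. It remains to verify the cyclically negative supply condition: summing the definition of $s$ along the closed-loop trajectory telescopes to
\[
\sum_{i=0}^{M-1}s(x(k+i))=V(x(k+M))-V(x(k))\leq -\nu(\|x(k)\|),
\]
where the inequality is exactly \eqref{eq:fsCLF}. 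Hence \eqref{eq:2.5neg_storage} holds with $\alpha_s:=\nu$.

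Both directions are essentially one-line telescoping arguments, so there is no serious obstacle. The only mild subtlety worth checking is that in the backward direction the supply $s$ must be defined as a function on $\Rset^n$ rather than along a specific trajectory, which is why I would use the closed-loop expression $V\circ f(\cdot,\kappa(\cdot))-V$; once the feedback is fixed this is a bona fide function of $x$, consistent with the definition of the dissipation inequality \eqref{eq:2.2din}. No additional regularity or boundedness on $V$ or $f$ is needed beyond what already appears in Assumption~\ref{assum:posdef}, so Assumption~\ref{assum:boundedness} is not used in this proof.
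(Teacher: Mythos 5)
Your proof is correct and follows essentially the same route as the paper: telescoping the one-step dissipation inequality for the forward direction, and defining the supply as the closed-loop one-step increment $V(x(k+1))-V(x(k))$ for the converse. Your remark that $s$ should be written as $V\circ f(\cdot,\kappa(\cdot))-V$ to make it a bona fide function of the state is a minor but welcome refinement of the paper's trajectory-indexed notation.
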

\begin{proof}
	Suppose that $V$ is a CDF for system \eqref{eq:2.1} in $\Xset$ with supply function $s$.  Then, by adding up inequality \eqref{eq:2.2din} and using inequality \eqref{eq:2.5neg_storage} we obtain:
	\[V(x(k+M))-V(x(k))\leq \sum_{i=0}^{M-1}s(x(k+i))\leq -\alpha_s(\|x(k)\|).\]
	Hence, $V$ is a finite--step CLF for system \eqref{eq:2.1} in $\Xset$ with $\nu(c):=\alpha_s(c)\in\cK_\infty$.
	
	Conversely, suppose that system \eqref{eq:2.1} admits a finite--step CLF $V$ in $\Xset$. Define $s(x(k)):=V(x(k+1))-V(x(k))$, which implies $V$ satisfies \eqref{eq:2.2din} for all $k\in\Nset$. Since $V$ satisfies \eqref{eq:fsCLF}, it holds that
	\begin{align*}
		\sum_{i=0}^{M-1}s(x(k+i))&=V(x(k+M))-V(x(k))\\
		&\leq-\nu(\|x(k)\|),\quad\forall k\in\Nset.
	\end{align*}
	Hence, $s$ satisfies \eqref{eq:2.5neg_storage} and $V$ is a CDF for system \eqref{eq:2.1} in $\Xset$ with a cyclically negative supply function.
\end{proof}

The above result shows that CDFs with a cyclically negative supply function provide an alternative characterization of non--monotonic, finite--step CLFs.

Next, we show that stabilizing receding horizon based on CDFs  presents certain advantages. Indeed, let us analyze the relation between stabilizing receding horizon control based on CDFs as in Problem~\ref{OCP2} and stabilizing finite--horizon optimal control based on finite--step CLFs as in problem (OCP-1) in \cite{Noroozi2020}. Firstly, observe that problem (OCP-1) in \cite{Noroozi2020} is a particular instance of Problem~\ref{OCP2} corresponding to the case when the kernel function $l$ is a $N$--step CLF for system \eqref{eq:2.1} and $M=1$. Secondly, \cite{Noroozi2020} uses the converse Lyapunov theorem approach of \cite{Geiselhart2014} to establish closed--loop asymptotic stability, which in turn requires that $\sum_{i=0}^{N-1}l(x(k))$ (with $x(k)$ the actual closed--loop state trajectory) is a standard CLF for system \eqref{eq:2.1}. As a result, in \cite{Noroozi2020} closed--loop asymptotic stability is guaranteed only if problem (OCP-1) (or Problem~\ref{OCP2} with $M=1$) is solved every $N$ discrete--time steps (i.e., at time $k$, $k+N$, $\ldots$) and the complete sequence $\mathbf{u}^\ast(k)$ is applied in open--loop, as done in finite--horizon optimal control.

In contrast, the closed--loop asymptotic stability result of this paper, i.e., Corollary~\ref{stab:MPC}, uses $V^\ast(x(k))=\sum_{i=0}^{N-1}l(x^\ast(i|k))$ as a control dissipation function. In turn, this allows us to establish closed--loop stability in the case when Problem~\ref{OCP2} (with any $M\geq 1$) is solved at every discrete--time step $k$ in a receding horizon fashion and $u(k)=u^\ast(0|k)$ is applied to the system at every time $k$. Additionally, by increasing $M$, we can reduce the value of $N$, which reduces the computational complexity of the corresponding receding horizon controllers. 

\section{Stabilization of interconnected synchronous generators}
\label{sec4}
To illustrate the developed stabilizing control methodology we consider a simplified model of a synchronous generator with significant oscillatory behavior taken from \cite{Dulau2015} and we construct the overall model of 2 interconnected generators with nonlinear coupling:
\[x(k+1)=Ax(k)+B_1u(k)+B_2\sin(x_1(k)-x_3(k)),\,\, k\in\Nset.\]
Above, $x_1(k)$, $x_3(k)$ are the angle deviations and $x_2(k)$, $x_4(k)$ are the frequency deviations of the first and second generator, respectively. The inputs $u_1(k)$, $u_2(k)$ control the mechanical power of the generators. The matrices are
\[
\begin{split}
A&=\diag\left(\begin{bmatrix}1 &31.4159\\ 0 & 0.999\end{bmatrix}, \begin{bmatrix}1 &31.4159\\ 0 & 0.999\end{bmatrix}\right),\\ B_1&=\diag\left(\begin{bmatrix}0\\ 0.01\end{bmatrix}, \begin{bmatrix}0\\ 0.01\end{bmatrix}\right), \\ B_2&=\begin{bmatrix}0 & -0.005& 0& 0.005\end{bmatrix}^\top,
\end{split}
\]
which correspond to the forward Euler discretization of the dynamics from \cite{Dulau2015} for a sampling period of $0.1s$.

The system is subject to the constraints $\Xset:=\{x\in\Rset^4 \ : \|x\|\leq 10\}$ and $\Uset=\{u\in\Rset^2 \ : \|u\|\leq 5\}$. The goal is to stabilize the states at the zero equilibrium from initial condition $x(0)=\begin{bmatrix}0 & 0.15 & 0 & -0.15\end{bmatrix}^\top$, while satisfying state and input constraints.

We implemented the receding horizon CDF controller based on Problem~\ref{OCP1} and Problem~\ref{OCP1:1} with the following settings and parameters: $l(x):=x^\top Qx$, $Q=\diag([0.1, \,10,\, 0.1,\, 10])$, $N=4$ (results provided in Figure~\ref{fig:3}), $N=9$ (results provided in Figure~\ref{fig:4}) and $\rho(c)=0.99c$. We observe that the negative supply condition \eqref{eq:2.5neg_storage} is already feasible for $M=1$ for $N=9$, but in the case $N=4$, a value of $M=5$ is required. For both cases the generators are successfully stabilized, conform with Corollary~\ref{stab:MPC}.
\begin{figure}
	\centering
	\includegraphics[width=1\columnwidth, height=8.5cm]{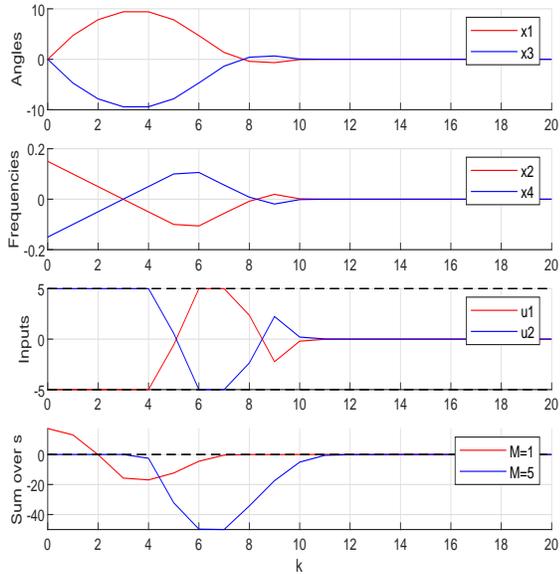}
	\caption{Closed--loop state and input trajectories and accumulated supply over time for Problem~\ref{OCP1} with $N=4$.}
	\label{fig:3}
\end{figure}
\begin{figure}
	\centering
	\includegraphics[width=1\columnwidth, height=8.5cm]{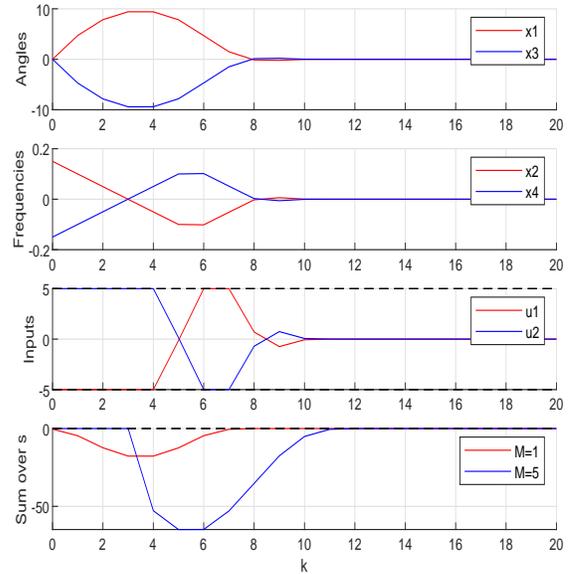}
	\caption{Closed--loop state and input trajectories and accumulated supply over time for Problem~\ref{OCP1} with $N=9$.}
	\label{fig:4}
\end{figure}

It is also worth to point out that $N=4$ is the smallest value which results in converging closed--loop trajectories, while $N=9$ is the smallest value for which \eqref{eq:2.5neg_storage} holds with $M=1$ for the closed--loop trajectories generated via Problem~\ref{OCP1}. Also, simulation results are presented in Figure~\ref{fig:5} for solving Problem~\ref{OCP2} with $M=1$ online, in a receding horizon fashion. In this case, $N=8$ was the smallest value that yielded feasible and converging closed--loop trajectories. 

The above results illustrate the non--conservativeness of stabilizing receding horizon controllers based on CDFs, which achieve convergence for shorter values of $N$, by means of a $M>1$. The values of $N$ and $M$ can be tuned to achieve a trade--off between convergence speed and computational complexity.
\begin{figure}
	\centering
	\includegraphics[width=1\columnwidth, height=8.5cm]{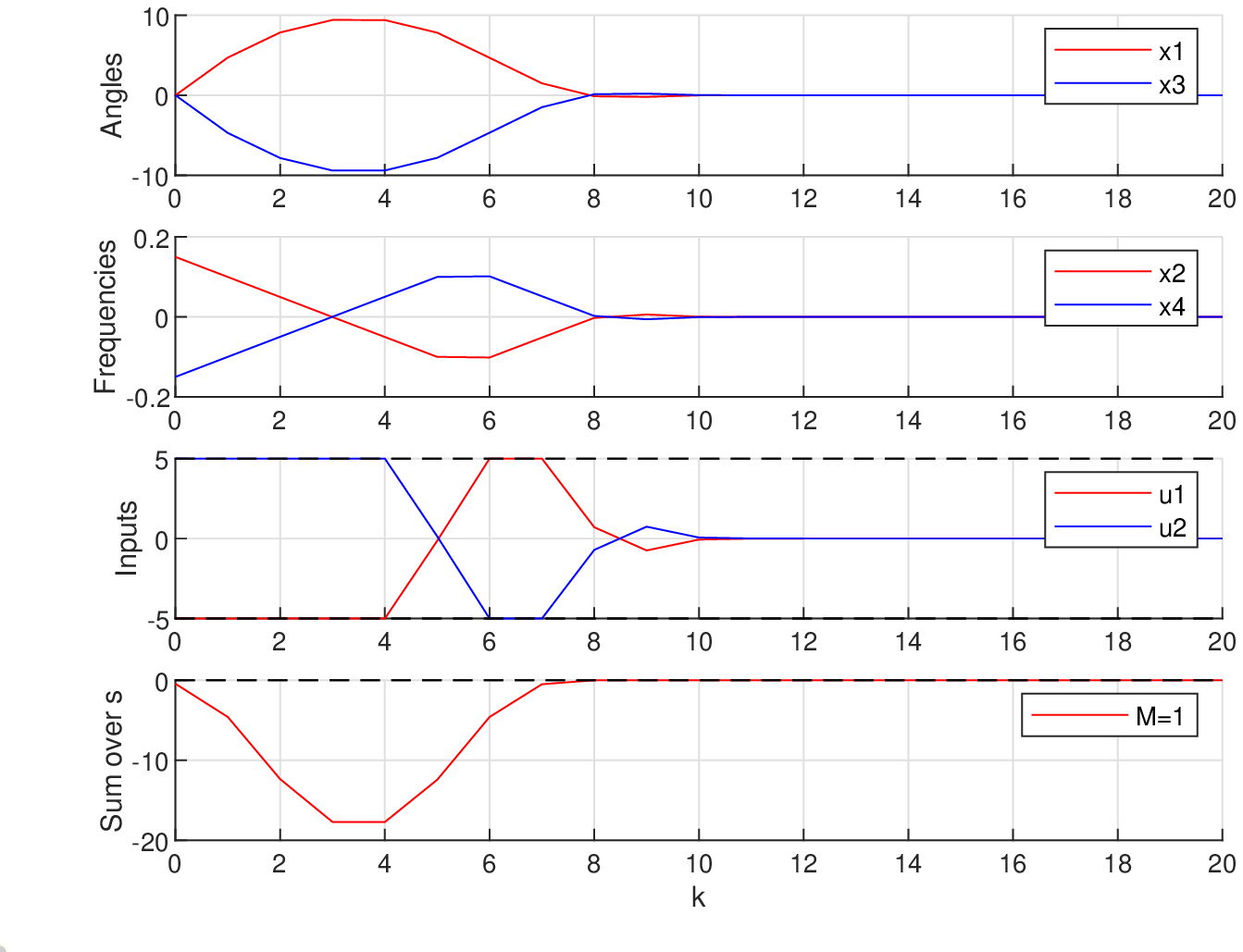}
	\caption{Closed--loop state and input trajectories and accumulated supply over time for Problem~\ref{OCP2} with $N=8$ and $M=1$.}
	\label{fig:5}
\end{figure}

All stabilizing CDF--based controllers have been implemented in Matlab using the YALMIP Toolbox \cite{yalmip} and the \emph{fmincon} Matlab solver.

\section{Conclusions}
\label{sec5}
This paper developed a dissipativity--based framework for synthesis of stabilizing receding horizon controllers for discrete--time nonlinear systems subject to state and input constraints. Key to the proposed approach is a result that establishes that positive definite control dissipation functions and cyclically negative supply functions yield asymptotically stable closed--loop trajectories. Two approaches to the design of stabilizing receding horizon controllers based on minimization of control dissipation functions were developed and demonstrated in stabilization of interconnected synchronous generators with nonlinear coupling. Future work will deal with generalizing the conditions under which CDFs imply asymptotic stability and extensions of CDFs to interconnected nonlinear systems.


\bibliographystyle{IEEEtran}        
\bibliography{Mircea}               

\end{document}